\newcommand*\fullref[3][\relax]{%
  \ifdefined\hyperref%
    {\hyperref[#3]{#2\penalty 200\ \ref*{#3}#1}}%
  \else%
    {#2\penalty 200\ \relax\ref{#3}#1}%
  \fi%
}
\tikzset{
  pretableaumatrix/.style={
    ampersand replacement=\&,
    matrix of math nodes,
    outer sep=1mm,
    inner sep=0mm,
    anchor=center,
    row sep={between borders,-\pgflinewidth},
    column sep={between borders,-\pgflinewidth},
    dottedentry/.style={densely dotted},
    dashedentry/.style={densely dashed},
    spaceentry/.style={draw=none,execute at begin node=\null},
  },
  pretableaunode/.style={
    font=\small,
    draw=gray,
    sharp corners,
    rectangle,
    anchor=base,
    text height=3.75mm,
    text depth=1.25mm,
    minimum height=5mm,
    minimum width=5mm,
    inner sep=0mm,
    outer sep=0mm,
    doublewidth/.style={minimum width=10mm},
    footnotesize/.style={font=\footnotesize},
    scriptsize/.style={font=\scriptsize},
  },
  tableaumatrix/.style={
    pretableaumatrix,
    every node/.append style={
      pretableaunode,
    },
  },
  medtableaumatrix/.style={
    pretableaumatrix,
    every node/.append style={
      pretableaunode,
      font=\footnotesize,
      text height=2.75mm,
      text depth=.75mm,
      minimum height=3.5mm,
      minimum width=3.5mm
    },
  },
  smalltableaumatrix/.style={
    pretableaumatrix,
    every node/.append style={
      pretableaunode,
      font=\scriptsize,
      text height=1.85mm,
      text depth=.15mm,
      minimum height=2.5mm,
      minimum width=2.5mm,
    },
  },
  tinytableaumatrix/.style={
    pretableaumatrix,
    every node/.append style={
      pretableaunode,
      font=\tiny,
      text height=1.25mm,
      text depth=.15mm,
      minimum height=1.75mm,
      minimum width=1.75mm
    },
  },
  tableau/.style={
    baseline=-1.25mm,
    every matrix/.style={tableaumatrix},
  },
  medtableau/.style={
    baseline=-1.25mm,
    every matrix/.style={medtableaumatrix},
  },
  smalltableau/.style={
    baseline=-1.25mm,
    every matrix/.style={smalltableaumatrix},
  },
  preshapetableaumatrix/.style={
    pretableaumatrix,
    execute at end cell={\strut},
    every node/.append style={
      draw=black,
      anchor=base,
      inner sep=0mm,
      outer sep=0mm,
    },
    shadedentry/.style={fill=gray},
    darkshadedentry/.style={fill=darkgray},
  },
  medshapetableaumatrix/.style={
    preshapetableaumatrix,
    every node/.append style={
      text height=2.75mm,
      text depth=.75mm,
      minimum height=3.5mm,
      minimum width=3.5mm
    },
  },
  shapetableaumatrix/.style={
    ampersand replacement=\&,
    matrix of math nodes,
    outer sep=0mm,
    inner sep=0mm,
    anchor=base,
    row sep={between borders,-\pgflinewidth},
    column sep={between borders,-\pgflinewidth},
    execute at begin cell={\strut},
    every node/.append style={draw,anchor=base,text height=1mm,text depth=.5mm,minimum size=1.5mm,inner sep=0mm,outer sep=0mm},
  },
  shapetableau/.style={
    every matrix/.style={shapetableaumatrix},
  },
  topalign/.style={
    every matrix/.append style={name=maintableau,anchor=maintableau-1-1.base},
    baseline,
  },
}
\newcommand*\tableau[2][]{\tikz[tableau,#1]\matrix{#2};}
\newcommand*{\textparens}[1]{\textup{(}#1\textup{)}}
\newcommand*{\defterm}[1]{\emph{#1}}
\newcommand\chyph{\penalty\@M-\hskip\z@skip}
\newcommand\cendash{\penalty\@M--\hskip\z@skip}
\newcommand\cslash{\penalty\@M/\hskip\z@skip}
\newcommand*\claptowidth[2]{%
    \sbox0{#2}%
    \kern 0.5\wd0\clap{#1}\kern 0.5\wd0\relax%
}
\theoremstyle{definition}
\newtheorem{definition}{Definition}[section]
\newtheorem{problem}[definition]{Open Problem}
\theoremstyle{plain}
\newtheorem{corollary}[definition]{Corollary}
\newtheorem{lemma}[definition]{Lemma}
\newtheorem{proposition}[definition]{Proposition}
\newtheorem{theorem}[definition]{Theorem}
\numberwithin{equation}{section}
\DeclarePairedDelimiter{\floor}{\lfloor}{\rfloor}
\DeclarePairedDelimiter{\parens}{\lparen}{\rparen}
\DeclarePairedDelimiter{\set}{\{}{\}}
\DeclarePairedDelimiterX{\gset}[2]{\{}{\}}{\,#1:#2\,}
\newcommand*\nset{\mathbb{N}}
\newcommand*\pset{\mathbb{P}}
\newcommand*\powerset{\@ifstar\@powersetnoparens\@powersetparens}
\newcommand*\@powersetparens[2][]{\pset\parens[#1]{#2}}
\newcommand*\@powersetnoparens[1]{\pset{#1}}
\newcommand*{\biggg}{\bBigg@{4}}
\newcommand*{\Biggg}{\bBigg@{5}}
\newcommand*{\sizeddelimiter}[2]{\bBigg@{#1}#2}
\newcommand*{\sizedsurd}[2][]{%
  {\@mathmeasure\z@{\nulldelimiterspace\z@}%
     {\sqrt[#1]{\vcenter to #2\big@size{}}}%
     \box\z@}%
 }
\DeclarePairedDelimiterX{\pres}[2]{\langle}{\rangle}{#1\,\delimsize\vert\,\mathopen{}#2}
\newcommand*\mathclaptowidth[2]{%
    \sbox0{$#2$}%
    \kern 0.5\wd0\mathclap{#1}\kern 0.5\wd0\relax%
}
\DeclarePairedDelimiter{\mset}{\langle}{\rangle}
\newcommand*\setyoung[1]{\mathrm{YT}\parens{#1}}
\newcommand*\setminors[3][]{\mathrm{M}_{#2}\parens[#1]{#3}}
\newcommand*\msetminors[3][]{\mathrm{mM}_{#2}\parens[#1]{#3}}
\begin{document}

\title{Reconstructing Young Tableaux}

\author{Alan J. Cain}
\address[A. J. Cain]{%
Centro de Matem\'{a}tica e Aplica\c{c}\~{o}es\\
Faculdade de Ci\^{e}ncias e Tecnologia\\
Universidade Nova de Lisboa\\
2829--516 Caparica\\
Portugal
}
\email{%
a.cain@fct.unl.pt
}
\thanks{This work was partially supported by by the Funda\c{c}\~{a}o para a Ci\^{e}ncia e a
  Tecnologia (Portuguese Foundation for Science and Technology) through the project {\scshape UIDB}/00297/2020
  (Centro de Matem\'{a}tica e Aplica\c{c}\~{o}es) and the project
  {\scshape PTDC}/{\scshape MAT-PUR}/31174/2017.}

\author{Erkko Lehtonen}
\address[E. Lehtonen]{%
Centro de Matem\'{a}tica e Aplica\c{c}\~{o}es\\
Faculdade de Ci\^{e}ncias e Tecnologia\\
Universidade Nova de Lisboa\\
2829--516 Caparica\\
Portugal
}
\email{%
e.lehtonen@fct.unl.pt
}

\begin{abstract}
  This paper completely characterizes the standard Young tableaux that can be reconstructed from their sets or multisets
  of $1$-minors. In particular, any standard Young tableau with at least $5$ entries can be reconstructed from its set
  of $1$-minors.
\end{abstract}

\subjclass[2020]{Primary 05E10}
\keywords{Young tableau, reconstruction, jeu de taquin, minor}

\maketitle

\section{Introduction}

Reconstruction problems are a very general class of problems that ask whether a mathematical object is uniquely
determined by a collection of pieces of partial information about the object.  A classical example of such a problem,
posed by Kelly~\cite{Kelly} and Ulam~\cite{Ulam}, is present in a famous unsolved question in graph theory, the graph
reconstruction conjecture, which concerns whether every finite simple graph with at least two vertices is uniquely
determined, up to isomorphism, by the collection of its one-vertex-deleted induced subgraphs.  Analogous reconstruction
problems have been defined and studied for many kinds of mathematical objects, such as relations, posets, matrices,
matroids, and permutations.

A reconstruction problem for integer partitions was first formulated by Mnukhin~\cite{Mnukhin_properties} and
Cameron~\cite{Cameron_stories} and can be stated as follows: Is a partition of $n$ uniquely determined by its set of
$k$-minors?  Here, a $k$-minor of a partition $\lambda$ of $n$ is a partition of $n - k$ whose Young diagram fits inside
that of $\lambda$.  The problem of determining the values of $n$ and $k$ for which any partition of $n$ can be
reconstructed from its set of $k$-minors has been studied by several authors, and bounds for feasible values of $n$ and
$k$ were obtained, e.g., by Pretzel and Siemons~\cite{PretzelSiemons} and Vatter~\cite{Vatter}.  The exact solution to
the partition reconstruction problem was provided by Monks~\cite{monks_solution}.


Monks proposed the analogous question for standard Young tableaux, asking which $n$ and
$k$ have the property that any standard Young tableau with $n$ entries can be reconstructed from its set of $k$-minors
\cite[Subsection~4.3]{monks_solution}. A $k$-minor of a standard Young tableau with $n$ elements is a standard Young
tableau with $n-k$ entries obtained in natural way by deleting entries using jeu de taquin
\cite[Subsection~1.2]{fulton_young} and renumbering.

This paper takes the first steps towards answering Monks's question, by completely characterizing the standard Young
tableaux that can be reconstructed from their sets or multisets of $1$-minors. In particular, for $n \geq 5$, any
standard Young tableau with $n$ entries can be reconstructed from its set \textparens{and thus from its multiset} of
$1$-minors \textparens{see \fullref{Theorem}{thm:set-reconst}}. This bound is the best possible, since there are
tableaux with $4$ entries that are not reconstructible from their sets \textparens{or multisets} of $1$-minors
\textparens{see \fullref{Section}{sec:reconst-multiset}}.

\section{Preliminaries}

Let $n \in \nset$. A \defterm{partition} $\lambda$ of $n$ is a non-increasing finite sequence
$(\lambda_1,\ldots,\allowbreak\lambda_m)$ whose terms are in $\nset$ and sum to $n$.

The \defterm{Young diagram} of shape $\lambda$, where $\lambda$ is a partition of $n$, is a left-aligned array of cells,
with $\lambda_h$ boxes in the $h$-th row \textparens{counting from the top}. For example, the Young diagram of shape
$(4,3,1,1)$ is
\[
\tableau{
\null \& \null \& \null \& \null \\
\null \& \null \& \null \\
\null \\
\null \\
}\quad.
\]

A \defterm{standard Young tableau} of shape $\lambda$, where $\lambda$ is a partition of $n$, is a Young diagram of
shape $\lambda$ in which every cell contains one of the natural numbers $1,2,\ldots,n$, each appearing exactly once,
such that the entries in each row are increasing from left to right, and the entries in each column are increasing from
top to bottom. For example, a standard Young tableau of shape $(4,3,1,1)$ is
\begin{equation}
\label{eq:youngtableaueg}
\tableau{
1 \& 2 \& 7 \& 8 \\
3 \& 5 \& 9 \\
4 \\
6 \\
}\quad.
\end{equation}
The set of all standard Young tableaux with $n$ entries is denoted $\setyoung{n}$. For brevity, this paper uses
\defterm{tableau} to mean a standard Young tableau. An \defterm{outer corner} of a tableau is a cell that has no cell
immediately below or to the right of it; in \eqref{eq:youngtableaueg}, the entries $6$, $8$, and $9$ occupy outer
corners.

There is a natural way of deleting entries from a tableau using what is known as `jeu de taquin'. Let
$T \in \setyoung{n}$ and $m \leq n$. To delete $m$ from $T$, first remove the cell containing $m$ from $T$, leaving a
space. Iterate the following process until it terminates: Consider the cell $R$ to the right of the space \textparens{if
  such a cell exists} and the cell $B$ below the space \textparens{if such a cell exists}. If the entry in $R$ is
smaller than that in $B$ or $B$ does not exist, slide $R$ into the space, leaving a new space where $R$ was; if the
entry in $B$ is smaller than that in $R$ or $R$ does not exist, slide $B$ into the space, leaving a new space where $B$
was; if neither exists, the space is where an outer corner was previously and the process terminates. Now renumber each
entry $p > m$ to $p-1$. This yields a new tableau in $\setyoung{n-1}$, which is denoted $T - m$. The term \defterm{jeu
  de taquin} specifically refers to the iterative sliding process.

For a tableau $T \in \setyoung{n}$ and $k \leq n$, a $k$-minor can be formed by iteratively deleting $k$ entries from
$T$. The set of $k$-minors of $T$ is denoted $\setminors{k}{T}$; the multiset of $k$-minors of $T$ is denoted
$\msetminors{k}{T}$.

\section{%
  \texorpdfstring
  {Reconstruction from the set of $1$-minors}
  {Reconstruction from the set of 1-minors}%
}

This section is devoted to proving that any tableau with at least $5$ entries can be reconstructed from its set of
$1$-minors. The key idea of the proof is that the set of $1$-minors determines (a) the location of the largest entry in
the tableau and (b) the set of 1-minors of the tableau obtained by deleting the largest entry. This allows a proof by
induction on the number of entries.

We begin with the observation that the shape of any tableau with at least $3$ entries can be reconstructed from the set of $1$-minors of the tableau.
This follows immediately from the results of Monks \cite[Theorem~2.1]{monks_solution}, but for the sake of self-containment, we provide a simple proof of this fact.

\begin{lemma}
  \label{lem:1minor-shape-reconst}
  The set of $1$-minors of a tableau in $\setyoung{n}$ for $n \geq 3$ determines the shape of the tableau.
\end{lemma}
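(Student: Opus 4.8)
The plan is to reduce the statement to a purely combinatorial fact about partitions and then settle that fact by a short case analysis. Write $\lambda$ for the shape of the tableau $T \in \setyoung{n}$. When an entry $m$ is deleted from $T$, the jeu de taquin slide moves the vacated space to an outer corner, which is then discarded; consequently the shape of $T - m$ is obtained from $\lambda$ by deleting exactly one outer corner. Moreover, if I delete the particular entry sitting in a chosen outer corner, then that cell has no neighbour to its right or below, so the slide terminates immediately and that corner is the one removed. Since every outer corner of $\lambda$ contains some entry, every outer-corner deletion of $\lambda$ arises as the shape of some member of $\setminors{1}{T}$. Hence the set of shapes occurring among the elements of $\setminors{1}{T}$ equals exactly the set of partitions obtained from $\lambda$ by deleting a single outer corner, and it suffices to prove that this set of partitions determines $\lambda$.

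First I would dispose of the generic case, in which $\lambda$ has at least two outer corners. Each such corner lies at the end of a maximal block of equal parts, so distinct corners lie in distinct rows; removing the corner in row $i$ yields a partition that agrees with $\lambda$ in every row except row $i$, where the part is one smaller. Taking the componentwise maximum of all these partitions therefore returns $\lambda_i$ in every row $i$, since for a fixed $i$ any partition coming from a corner in a different row still carries the full value $\lambda_i$ in row $i$. Thus $\lambda$ is recovered as the join, in Young's lattice, of its outer-corner deletions.

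The hard part is the degenerate case in which $\lambda$ has a single outer corner, that is, when $\lambda = (c^d)$ is a rectangle and the set of outer-corner deletions is the singleton $\{\mu\}$ with $\mu = (c^{d-1}, c-1)$ \textparens{read as $(1^{d-1})$ when $c = 1$}. Because the number of outer corners equals the number of distinct part-sizes of $\lambda$, the rectangles are precisely the partitions whose set of $1$-minors is a singleton, so a rectangle can never be confused with a non-rectangle. It then remains to check that two distinct rectangles of the same size cannot produce the same $\mu$. For two ``fat'' rectangles \textparens{those with $c, d \geq 2$} the largest part of $\mu$ equals $c$ and its multiplicity equals $d - 1$, so $\mu$ pins down both $c$ and $d$; and neither clashes with the minors $(n-1)$ and $(1^{n-1})$ of the single row $(n)$ and single column $(1^n)$, which have one part and $n-1$ equal parts respectively. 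The only remaining potential collision is between $(n-1)$ and $(1^{n-1})$ themselves, and these coincide only when $n - 1 \leq 1$, which is excluded by the hypothesis $n \geq 3$. This final boundary check is exactly where the bound $n \geq 3$ enters, and it is the one point that requires care.
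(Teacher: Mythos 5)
Your proof is correct and follows essentially the same route as the paper: you identify the shapes of the $1$-minors with the outer-corner deletions of $\lambda$, recover $\lambda$ as the componentwise maximum (union) when there are at least two corners, and treat the rectangular (single-corner) case separately, with $n \geq 3$ entering exactly where the paper uses it, to separate the single-row and single-column cases. The only cosmetic difference is that you finish the rectangle case by an injectivity argument (reading $c$ and $d$ off $\mu = (c^{d-1}, c-1)$) where the paper gives an explicit add-one-cell reconstruction rule, and you make explicit the fact, implicit in the paper, that deleting the entry in an outer corner realizes every corner-deletion.
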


\begin{proof}
  Let $T \in \setyoung{n}$ where $n \geq 3$.

  Suppose first that every tableau in $\setminors{1}{T}$ has the same shape. Then all jeu de taquin processes in $T$
  must end at the same outer corner. Thus $T$ has only one outer corner, which implies that $T$ is rectangular, in the
  sense of having shape $\parens{\ell,\ell,\ldots,\ell}$ for some $\ell$, and the tableaux in $\setminors{1}{T}$ will
  have shape $\parens{\ell,\ell,\ldots,\ell,\ell-1}$. If there are at least two rows and columns in this shape, then the
  shape of $T$ can be obtained by taking the shape of the tableaux in $\setminors{1}{T}$ and adding $1$ to the last
  part. Otherwise the shape of the $1$-minors has only one row or only one column \textparens{but not both since there
    are at least $3$ entries in $T$ and so at least $2$ entries in every $1$-minor}, and the shape of $T$ is obtained by
  extending the row or column by $1$, since this is the only way to add a cell and leave only one outer corner.


  Suppose that $\setminors{1}{T}$ contains tableaux of at least two different shapes. Then $T$ must have at least two outer
  corners, and so the shape of $T$ can be obtained by taking the union of all shapes of tableaux in $\setminors{1}{T}$,
  since an outer corner of $T$ absent from one shape will be present in another.
\end{proof}

\begin{lemma}
  \label{lem:1minor-n-pos-reconst}
  The set of $1$-minors of a tableau in $\setyoung{n}$ for $n \geq 4$ determines the location of the largest entry in
  that tableau.
\end{lemma}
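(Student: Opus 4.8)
The plan is to first apply \fullref{Lemma}{lem:1minor-shape-reconst} to recover the shape $\lambda$ of $T$ together with its outer corners, and then to single out, among those corners, the one occupied by the largest entry $n$. Since entries increase along rows and down columns, $n$ must lie at an outer corner $c^{*}$ of $\lambda$. First I would record the behaviour of the deletion that removes $n$ itself: here no sliding occurs and the jeu de taquin terminates at once, so $T-n$ is simply $T$ with the cell $c^{*}$ erased; its shape is $\lambda$ with $c^{*}$ removed, and its largest entry $n-1$ sits wherever $n-1$ lay in $T$, at some cell $d\neq c^{*}$.

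The heart of the argument is a \emph{slide-or-stay dichotomy} for the deletion of an entry $m<n$. The key sub-claim is that during this jeu de taquin the maximal entry $n$ moves at most once: being the global maximum, $n$ can slide into the vacant cell only when the competing neighbour is absent, and this forces $n$ to move upward or leftward; but the cell $c^{*}$ it thereby vacates is an outer corner, with no cell to its right or below, so the process terminates immediately at $c^{*}$. Hence every $1$-minor is of exactly one of two types: either $n$ stays put, and the minor has shape $\lambda$ with some corner $c\neq c^{*}$ removed and largest entry $n-1$ at $c^{*}$; or $n$ slides, and the minor has shape $\lambda$ with $c^{*}$ removed and largest entry at a cell adjacent to $c^{*}$. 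Combined with the previous paragraph this gives the central fact: every $1$-minor whose shape is \emph{not} ``$\lambda$ with $c^{*}$ removed'' carries its largest entry at $c^{*}$, whereas the minors of that exceptional shape carry theirs at $d$ or at a neighbour of $c^{*}$.

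I would then read off $c^{*}$ from this. If $\lambda$ has a unique outer corner it is forced to be $c^{*}$. Otherwise, deleting the entry at any outer corner yields a $1$-minor of the corresponding corner-deleted shape, so every such shape occurs in $\setminors{1}{T}$; call a corner \emph{marked} if it is the position of the largest entry of some $1$-minor. The central fact shows that $c^{*}$ is marked, that a neighbour produced by sliding is never itself a corner of $\lambda$, and that the only other corner that can be marked is $d$, and then only when $n-1$ happens to lie at a corner of $\lambda$. When $\lambda$ has at least three outer corners one separates $c^{*}$ from $d$ by counting the distinct shapes whose minors point to each, since $c^{*}$ is pointed to by a minor of every non-exceptional corner-shape, while $d$ is pointed to only within the one exceptional shape.

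The main obstacle, which I expect to absorb most of the effort, is the residual ambiguity when $\lambda$ has exactly two outer corners and the largest entry of $T-n$ sits at the other corner: there the max-position data is symmetric between the two candidates, and they are distinguished only through the extra ``sliding'' minors, whose very existence depends on the local geometry at each corner (whether the maximum can be slid out of it at all). I would finish with a short case analysis at the two corners, using that a sliding minor carries its largest entry at a non-corner cell adjacent to the corner from which $n$ slid, so that the presence of such a minor pins $c^{*}$ to that corner and its absence excludes it; in the remaining fully symmetric situations one must descend to the non-maximal entries to break the tie. The hypothesis $n\geq 4$ ensures that each $1$-minor retains enough entries for all of these positional comparisons to be meaningful.
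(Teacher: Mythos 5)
Your setup is sound, and up to the two-corner case it parallels the paper's argument: you recover the shape via \fullref{Lemma}{lem:1minor-shape-reconst}; your slide-or-stay dichotomy (the entry $n$ moves at most once, and if it moves the process terminates at $c^{*}$) is correct and gives the right trichotomy of minors; and your counting argument for three or more outer corners is valid. The paper reaches the same central fact by tracking where the symbol $n-1$ can occur in a minor, which encodes the same information, so this part is essentially the paper's proof in different clothing.

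The genuine gap is in the case of exactly two outer corners, which is precisely where the paper does its real work. Your claim that the \emph{absence} of a sliding minor excludes a corner is false as stated: take $T$ of shape $(3,2)$ with rows $1\,2\,5$ and $3\,4$, so $c^{*}=(1,3)$. No deletion of $m<5$ ever slides the $5$\dash the space to its left, at $(1,2)$, always has the occupied cell $(2,2)$ below it, so the competing neighbour is never absent\dash yet $c^{*}=(1,3)$. Here every minor of shape $(3,1)$ has its maximum at $(1,3)$ and the unique minor of shape $(2,2)$ has its maximum at $(2,2)$: exactly the ``fully symmetric'' data you defer to the end. Your fallback, ``descend to the non-maximal entries to break the tie,'' is a placeholder rather than an argument, and no such tie-break can exist without invoking $n\geq 4$: for $n=3$ and shape $(2,1)$ the two tableaux have \emph{equal} sets of $1$-minors, so the symmetric case is genuinely irresolvable there. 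Relatedly, your stated use of $n\geq 4$ (``each minor retains enough entries'') is not where the hypothesis actually matters. What is needed, and what the paper proves, is that the symmetric situation forces $T$ to be a rectangle with one extra cell\dash shape $\parens{\ell+1,\ell,\ldots,\ell}$ or $\parens{\ell,\ldots,\ell,1}$\dash with $n$ in the extra cell; since $n\geq 4$ excludes $(2,1)$, the extra cell, and hence the position of $n$, is then determined by the shape alone. Your machinery could be pushed to the same conclusion\dash absence of a sliding minor excludes a corner only when the local geometry \emph{permits} sliding there, and one can check that for every two-corner shape other than $(2,1)$ at least one corner permits sliding\dash but that case analysis is the crux of the lemma, and it is missing from your proposal.
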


\begin{proof}
  Let $T \in \setyoung{n}$. By \fullref{Lemma}{lem:1minor-shape-reconst}, the shape of $T$ can be determined from
  $\setminors{1}{T}$.

  If $T$ has only one outer corner, this must be the location of $n$.

  For the purposes of this proof, an outer corner of a tableau in $\setminors{1}{T}$ that corresponds to an outer corner in
  $T$ is called a \defterm{surviving} outer corner. If $T$ has more than one outer corner, then since at least one
  jeu de taquin process terminates at each outer corner, each outer corner of $T$ corresponds to a surviving outer
  corner in at least one tableau in $\setminors{1}{T}$.

  Consider how the symbol $n-1$ can appear in a minor in $\setminors{1}{T}$. It is the largest symbol in such a minor, so
  it must be in an outer corner. It either arises from the entry $n$ being renumbered to $n-1$ \textparens{when the
    minor is formed by deleting any entry in $\set{1,\ldots,n-1}$} or else it corresponds to the entry $n-1$ in $T$
  \textparens{when the minor is formed by deleting $n$ and no entries are renumbered}.

  Suppose that $T$ has multiple outer corners. Each of these outer corners corresponds to a surviving outer corner in
  some tableau in $\setminors{1}{T}$. In particular, the cell containing $n$ in $T$ corresponds to a surviving outer
  corner containing $n-1$ in any minor arising from a jeu de taquin process that ends at a different outer corner.  On
  the other hand, $n-1$ can appear in some other surviving outer corner at most once, in the minor arising from the
  deletion of $n$ \textparens{which only happens when $n-1$ is in a different outer corner of $T$}.

  If there are at least three outer corners in $T$, there are at least two minors where $n-1$ appears in the surviving
  outer corner corresponding to the cell containing $n$ in $T$, so in this case the location of $n$ in $T$ is
  determined.

  The case that remains is when $T$ has exactly two outer corners; assume this case. Then there are at least two
  distinct minors. If $n-1$ appears more than once in the same surviving outer corner in at least two different minors,
  the corresponding outer corner of $T$ contains $n$. If an outer corner of $T$ corresponds to a surviving outer corner
  of a minor containing some symbol $k < n-1$, then this outer corner of $T$ cannot contain $n$, for this $k$ is either
  unchanged from $T$ or arises via renumbering from $k+1 < n$, and so the other outer corner of $T$ contains $n$.

  Suppose the jeu de taquin process that arises in deleting $k < n$ ends at the outer corner containing $n$. Then there
  would be a minor in which the other surviving outer corner contains $n-2$, as a result of renumbering $n-1$. By the
  previous paragraph, this determines the location of $n$ in $T$. So assume there is only one jeu de taquin process
  ending at $n$ in $T$. Since $T$ has only two outer corners, $n$ must be either the bottom row or the rightmost column
  in $T$, and every jeu de taquin path starting in this row or column must end at $n$. Thus $n$ is the only entry in
  this row or column. This means $T$ is a rectangle with a single cell at one side or below \textparens{that is, shape
    $\parens{\ell+1,\ell,\ldots,\ell}$ or $\parens{\ell,\ell,\ldots,\ell,1}$}, and this single cell contains $n$. Since
  there are at least $4$ entries, $T$ does not have shape $\parens{2,1}$ and so the single cell containing $n$ is
  determined.
\end{proof}

\begin{lemma}
  \label{lem:1minors-deleting}
  Let $T \in \setyoung{n}$. Then the $1$-minors of $T - n$ can be obtained by deleting the entry $n-1$ from each of the
  $1$-minors of $T$.
\end{lemma}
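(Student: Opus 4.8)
The plan is to establish the set equality $\setminors{1}{T - n} = \set{M - (n-1) : M \in \setminors{1}{T}}$. The right-hand side is meaningful because every $1$-minor of $T$ lies in $\setyoung{n-1}$, so its largest entry $n-1$ occupies an outer corner and can be deleted. I would begin with the structural observation that, as $n$ is the largest entry of $T$, it sits at an outer corner; hence $T - n$ is obtained simply by erasing that cell, with no sliding and no renumbering. Consequently $\setminors{1}{T - n} = \set{(T-n) - m : m \in \set{1, \ldots, n-1}}$.

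Next I would unwind what ``deleting $n-1$'' means for a given $1$-minor $T - m$ of $T$. If $m = n$, then $T - n$ carries the original labels of $T$, so deleting $n-1$ removes the original entry $n-1$. If $m < n$, then deleting $m$ renumbers the original $n$ down to $n-1$, so the entry $n-1$ of $T - m$ is the descendant of $n$, and deleting it removes (the image of) $n$. In either case $(T-m)-(n-1)$ and $(T-n)-m$ delete the entries originally labelled $m$ and $n$, only in opposite orders.

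The crux is therefore the commutativity of these two deletions: for $m < n$,
\[
  (T - m) - (n-1) = (T - n) - m .
\]
This is an instance of the order-independence of jeu de taquin deletions, a standard feature of the process \textparens{see \cite[Subsection~1.2]{fulton_young}}; it is needed here only in the transparent special case in which one of the two deleted entries is the global maximum $n$. Since $n$ sits at an outer corner and is larger than every other entry, it is never slid during the deletion of a smaller entry unless it is the unique neighbour of the vacated cell, so the two deletion processes cannot interfere. I expect proving this commutativity to be the main obstacle: the careful part is checking that the two jeu de taquin paths remove the same pair of cells and leave the same filling, where the placement of the maximum at an outer corner is exactly what keeps the two orders in step.

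Granting the commutativity, the conclusion follows by a short set manipulation. As $m$ ranges over $\set{1, \ldots, n-1}$, the tableaux $(T-m)-(n-1) = (T-n)-m$ range over exactly the $1$-minors of $T - n$. The one remaining value $m = n$ yields $(T-n)-(n-1)$, which by the commutativity applied with $m = n-1$ already appears among these. Hence $\set{M - (n-1) : M \in \setminors{1}{T}} = \setminors{1}{T - n}$, completing the argument.
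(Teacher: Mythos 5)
Your overall architecture coincides with the paper's: reduce the statement to the commutation identity $\parens{T-m}-\parens{n-1} = \parens{T-n}-m$ for $m \leq n-1$, observe that $n$ occupies an outer corner so that forming $T-n$ involves no sliding or renumbering (and likewise $n-1$ sits in an outer corner of every $1$-minor, so deleting it is trivial), and dispose of the extra card $m = n$ via the coincidence $\parens{T-\parens{n-1}}-\parens{n-1} = \parens{T-n}-\parens{n-1}$ --- the paper makes exactly this closing remark. The genuine gap is that the commutation identity, which you yourself flag as the main obstacle, is never actually proved: you lean on ``order-independence of jeu de taquin deletions, a standard feature of the process,'' and no such general fact exists. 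Deletions with renumbering do not commute: for
$T = \tableau{ 1 \& 3 \\ 2 \& 4 \\ }$,
deleting $1$ gives $\tableau{ 1 \& 2 \\ 3 \\ }$, in which the original $3$ is now labelled $2$, and deleting that entry yields the column $\tableau{ 1 \\ 2 \\ }$; whereas deleting $3$ first gives $\tableau{ 1 \& 3 \\ 2 \\ }$, and then deleting $1$ yields the row $\tableau{ 1 \& 2 \\ }$. So the cited principle cannot carry the step even as a lemma to be quoted; the order-independence in Fulton concerns rectification of skew tableaux (independence of the choice of inner corners), not deletion of designated entries with renumbering.

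What rescues your plan is precisely the special feature you mention but do not exploit to completion: $n$ is the maximum, so during the deletion of $m < n$ the sliding process can move $n$ only when $n$ is the sole candidate neighbour of the space, and since $n$'s cell is an outer corner the process then terminates immediately --- that is, $n$ can move only as the final slide. This yields the clean two-case verification that constitutes the paper's actual proof: if the jeu de taquin process in $T$ ends at a corner other than $n$'s, then $n$ never moves, the identical sequence of slides runs in $T - n$, and $T - m$ differs from $\parens{T-n}-m$ only by the presence of $n-1$ (the renumbered $n$) in an outer corner; if the process ends at $n$'s cell, then the process in $T - n$ is the same sequence of slides minus that final move, with the same conclusion. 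Writing out these two cases, and noting that in both of them $\parens{T-m}-\parens{n-1} = \parens{T-n}-m$, is the missing content; your sketch contains the right ingredients, so the gap is fillable exactly along the lines you indicate, but as submitted the crucial step rests on an appeal to a false general statement.
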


\begin{proof}
  Note that $n-1$ must be in an outer corner of each of the $1$-minors of $T$ and in particular in $T - n$; thus
  deleting $n-1$ from any of these does not result in any sliding or renumbering.

  Let $m \leq n-1$ and consider deleting $m$ from $T$ and from $T - n$, giving $1$-minors $T - m$ and
  $\parens{T - n} - m$ respectively.

  Suppose first that the jeu de taquin process in $T$ ends at some entry other than $n$. Then the entry $n-1$ in $T - m$
  is obtained by renumbering $n$, which was not moved from its original location. The same jeu de taquin process occurs
  in $T - n$ and ends at the same entry. Hence $T - m$ and $\parens{T - n} - m$ differ only in the presence of $n-1$ in
  some outer corner of $T - m$, and so $\parens{T - m} - \parens{n-1} = \parens{T - n} - m$.

  Suppose now that the jeu de taquin process in $T$ ends at $n$. Then the last move of the jeu de taquin process in $T$
  is sliding the cell containing $n$ either vertically or horizontally; it is then renumbered to $n-1$. The jeu de
  taquin process in $T - n$ arising from deleting $m$ is the same except that it does not include this last move. Hence
  again $T - m$ and $\parens{T - n} - m$ differ only in the presence of $n-1$ in some outer corner of $T - m$, and so
  again $\parens{T - m} - \parens{n-1} = \parens{T - n} - m$.

  Thus the $1$-minor of $T - n$ formed by deleting $m$ equals the result of deleting $n-1$ from $T - m$.
  \textparens{Note in particular that $\parens{T - \parens{n-1}} - \parens{n-1} = \parens{T - n} - \parens{n-1}$ by
    taking $m = n-1$; hence this $1$-minor of $T - n$ arises by deleting $n-1$ from two different $1$-minors of $T$,
    namely $T-\parens{n-1}$ and $T-n$.}
\end{proof}

At this point, the results necessary for the induction step have been proved. It is now necessary to establish that all
tableaux with $5$ entries can be reconstructed from their sets of $1$-minors. It would be possible to do this by an
exhaustive calculation, but the following lemmata reduce the amount of calculation needed.

First, the following lemma is a consequence of \fullref{Lemma}{lem:1minor-shape-reconst} and the fact that there is
exactly one tableau with a single row or single column and a given number of entries.

\begin{lemma}
  \label{lem:1minor-row-or-column}
  A tableau in $\setyoung{n}$ for $n \geq 3$ with a single row \textparens{that is, shape $\parens{n}$} or a single
  column \textparens{that is, shape $\parens{1,1,\ldots,1}$} is reconstructible from its set of $1$-minors.
\end{lemma}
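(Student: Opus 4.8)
The plan is to combine \fullref{Lemma}{lem:1minor-shape-reconst} with the elementary fact that each of the two shapes in question admits a unique standard Young tableau. Recall that $T$ is reconstructible from its set of $1$-minors precisely when $T$ is the only tableau whose set of $1$-minors equals $\setminors{1}{T}$; so the task is to show that no tableau distinct from $T$ shares its set of $1$-minors.

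First I would observe that, since $n \geq 3$, \fullref{Lemma}{lem:1minor-shape-reconst} applies, so the set $\setminors{1}{T}$ determines the shape of $T$. In particular, any tableau $T'$ with $\setminors{1}{T'} = \setminors{1}{T}$ must have the same shape as $T$, namely $\parens{n}$ if $T$ is a single row and $\parens{1,1,\ldots,1}$ if $T$ is a single column.

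The second step is to note that each of these shapes carries exactly one standard Young tableau: for shape $\parens{n}$ the row-increasing condition forces the entries to read $1, 2, \ldots, n$ from left to right, and for shape $\parens{1,1,\ldots,1}$ the column-increasing condition forces them to read $1, 2, \ldots, n$ from top to bottom. Hence $T'$ has both the same shape as $T$ and, by this uniqueness, the same entries, so $T' = T$ and $T$ is reconstructible.

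I expect no genuine obstacle here: the real content is carried entirely by \fullref{Lemma}{lem:1minor-shape-reconst}, and the remaining uniqueness statement is immediate from the definition of a standard Young tableau. The only point needing a moment's care is the logical form of ``reconstructible'', namely that it suffices to exclude a \emph{different} tableau with the same set of $1$-minors rather than to describe an explicit reconstruction procedure.
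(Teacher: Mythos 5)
Your proposal is correct and matches the paper's own argument exactly: the paper derives this lemma as an immediate consequence of \fullref{Lemma}{lem:1minor-shape-reconst} together with the fact that shapes $\parens{n}$ and $\parens{1,1,\ldots,1}$ each admit a unique standard Young tableau. Your added care about the logical form of ``reconstructible'' is sound but does not change the substance.
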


\begin{lemma}
  \label{lem:1minor-row-or-column-and-cell}
  A tableau in $\setyoung{n}$ for $n \geq 4$ with two rows \textparens{respectively, columns}, the second containing
  exactly one entry \textparens{that is, shape $\parens{n-1,1}$; respectively $\parens{2,1,1,\ldots,1}$} is
  reconstructible from its set of $1$-minors.
\end{lemma}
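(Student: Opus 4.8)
The plan is to treat the two-row case, shape $\parens{n-1,1}$, in detail, and then obtain the two-column case, shape $\parens{2,1,\ldots,1}$, by transposition. First I would observe that a tableau $T$ of shape $\parens{n-1,1}$ is completely determined by the single entry $b$ in its second-row cell: the top-left cell must hold $1$, and the first row is then $1$ followed by the elements of $\set{2,\ldots,n}\setminus\set{b}$ in increasing order. So it suffices to recover $b$ from $\setminors{1}{T}$. By \fullref{Lemma}{lem:1minor-shape-reconst} the shape of $T$ is known, and by \fullref{Lemma}{lem:1minor-n-pos-reconst} the location of the largest entry $n$ is known. If $n$ occupies the second-row cell then $b=n$ and $T$ is determined; otherwise $n$ lies in the first row and $b\leq n-1$, the case that requires work.

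To pin down $b$ in this case I would examine the $1$-minors of shape $\parens{n-2,1}$. A $1$-minor has this shape precisely when a first-row entry is deleted and the jeu de taquin slide travels rightward, leaving the second-row cell intact; the two remaining deletions---of the entry $b$ itself, and of the top-left $1$ in the special case $b=2$---send the slide downward and yield a single-row minor instead. In the surviving minors of shape $\parens{n-2,1}$ the second-row cell still holds $b$ before renumbering, so afterwards it contains $b$ when the deleted entry exceeds $b$ and $b-1$ when the deleted entry is smaller than $b$. Thus every minor of shape $\parens{n-2,1}$ has second-row entry $b$ or $b-1$, and the value $b$ is attained, since deleting $n$---which lies in the first row and exceeds $b$---produces such a minor with second-row entry $b$. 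Consequently $b$ is the largest second-row entry occurring among the minors of shape $\parens{n-2,1}$, a quantity read off directly from $\setminors{1}{T}$. \textparens{Here \fullref{Lemma}{lem:1minor-n-pos-reconst} is genuinely needed: this maximum equals $n-1$ both when $b=n-1$ and when $b=n$, and only the position of $n$ tells these two apart.}

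Finally I would deduce the two-column case by transposition. Reflecting in the main diagonal is a bijection of $\setyoung{m}$ carrying a tableau of shape $\lambda$ to one of the conjugate shape $\lambda^{t}$, and since the deletion rule slides whichever of the right and lower neighbours is smaller while the renumbering step is position-independent, jeu de taquin deletion commutes with transposition; hence $\setminors{1}{T^{t}}=\set{S^{t}:S\in\setminors{1}{T}}$. As transposition is a bijection, $T^{t}$ is reconstructible from its set of $1$-minors if and only if $T$ is, and $\parens{2,1,\ldots,1}$ is exactly the transpose of $\parens{n-1,1}$. The main obstacle is the bookkeeping in the middle step---verifying precisely which deletions preserve the shape $\parens{n-2,1}$ and tracking the renumbering's effect on the second-row entry---together with ensuring that the location of $n$ really does resolve the tie between $b=n-1$ and $b=n$.
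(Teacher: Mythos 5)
Your proposal is correct and follows essentially the same route as the paper: determine the shape via \fullref{Lemma}{lem:1minor-shape-reconst}, locate $n$ via \fullref{Lemma}{lem:1minor-n-pos-reconst}, and, when $n$ is in the first row, recover the second-row entry $b$ as the maximum second-row entry occurring among the $1$-minors, with the column case handled by symmetry. Your version merely makes the paper's bookkeeping explicit---classifying exactly which deletions yield shape $\parens{n-2,1}$ versus a single row, and noting that deleting $n$ leaves $b$ untouched---which is the same observation the paper states more briefly.
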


\begin{proof}
  The set of $1$-minors determines the shape of the tableau by \fullref{Lemma}{lem:1minor-shape-reconst}. Consider the
  case where the shape is $\parens{n-1,1}$; the other case is symmetrical. Note that it suffices to determine the entry
  in the single cell in the second row. By \fullref{Lemma}{lem:1minor-n-pos-reconst}, the location of the entry $n$ is
  determined by the $1$-minors. If it is the entry in the second row, the tableau is determined. So suppose it is not
  the entry in the second row; it must be at the right-hand end of the first row. Any deletion must affect the entry in
  the second row in one of three possible ways: the cell in the second row is either slid out of position, renumbered,
  or left untouched. And the last possibility arises at least once, when $n$ is deleted. Thus the entry in the second
  row will be the maximum entry that occurs in the second row of a $1$-minor. Hence the tableau is determined.
\end{proof}

The proof of the following lemma is the only piece of manual calculation necessary to establish the basis of the
induction.

\begin{lemma}
  \label{lem:1minor-32-or-221}
  Any tableau with shape $\parens{3,2}$ or $\parens{2,2,1}$ can be reconstructed from its set of $1$-minors.
\end{lemma}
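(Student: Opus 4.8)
The plan is to verify this by direct enumeration, since there are only finitely many tableaux of each shape and the lemma explicitly acknowledges this is the sole piece of manual calculation. The shapes $(3,2)$ and $(2,2,1)$ are conjugate (transposes) of one another, and jeu de taquin respects transposition (deleting an entry and then transposing gives the same result as transposing and then deleting), so the set of $1$-minors of a tableau $T$ of shape $(3,2)$ transposes to the set of $1$-minors of $T^{t}$, which has shape $(2,2,1)$. Hence it suffices to handle the shape $(3,2)$ alone, and the case $(2,2,1)$ follows by symmetry. I would state this reduction first.

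For shape $(3,2)$ I would first count the tableaux: the number of standard Young tableaux of shape $(3,2)$ is given by the hook length formula as $5!/(4\cdot 3\cdot 1\cdot 2\cdot 1)=5$. So there are exactly five tableaux to consider. The strategy is then to compute, for each of these five tableaux, its full set of $1$-minors (each $1$-minor being a tableau of shape $(4)$, $(3,1)$, or $(2,2)$ obtained by jeu de taquin deletion of one of the five entries followed by renumbering), and to exhibit that the five resulting sets are pairwise distinct. Once distinctness is established, each tableau is uniquely recovered from its set of $1$-minors, which is exactly reconstructibility.

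To organize the computation and reduce the work, I would note that by \fullref{Lemma}{lem:1minor-shape-reconst} the shape $(3,2)$ is already determined by the set of $1$-minors, so I only need to distinguish tableaux \emph{of the same shape} from one another; and by \fullref{Lemma}{lem:1minor-n-pos-reconst} the location of the largest entry $5$ is also determined, so I may further group the five tableaux according to which outer corner holds the $5$ and only need to separate tableaux within each such group. For shape $(3,2)$ the two outer corners are the end of the first row and the end of the second row; I expect one group to be a singleton (immediately reconstructible) and the remaining group to contain the tableaux needing genuine comparison. Within a group, I would display the distinct minors explicitly and point out one minor that differs, rather than tabulating everything.

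I do not expect any real obstacle here: the whole lemma is finite and mechanical, and the invariants from the earlier lemmata cut the number of pairwise comparisons down to a handful. The only mild care required is to carry out each jeu de taquin slide correctly and to track the renumbering consistently; the main risk is a clerical slip in one of the five minor sets rather than any conceptual difficulty. I would present the computation compactly, using the \texttt{tableau} macro to display the small tableaux and relying on the shape and top-entry reductions to keep the case analysis short.
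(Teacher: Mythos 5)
Your proposal matches the paper's proof in essence: both establish the lemma by exhaustively computing the sets of $1$-minors of the five tableaux of shape $\parens{3,2}$ and checking that these sets are pairwise distinct, with the shape $\parens{2,2,1}$ case dismissed by transposition symmetry (the paper invokes \fullref{Lemma}{lem:1minor-shape-reconst} first, just as you do). Your extra refinement of grouping by the location of $5$ via \fullref{Lemma}{lem:1minor-n-pos-reconst} is sound but only trims the bookkeeping of the same exhaustive check --- and note the five tableaux actually split $3$/$2$ between the two outer corners, not $4$/$1$ as you guessed, so neither group is a singleton.
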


\begin{proof}
  The set of $1$-minors determines the shape of the tableau by \fullref{Lemma}{lem:1minor-shape-reconst}. There are five
  tableaux with shape $\parens{3,2}$, and their sets of $1$-minors are distinct:
  \begin{align*}
    \setminors[\Bigg]{1}{\tableau{
    1 \& 2 \& 3 \\
    4 \& 5 \\
    }} &= \set[\Bigg]{
         \tableau{
         1 \& 2 \\
         3 \& 4 \\
         },\;
         \tableau{
         1 \& 2 \& 3 \\
         4 \\
         }
    }, \displaybreak[0]\\
    \setminors[\Bigg]{1}{\tableau{
    1 \& 2 \& 4 \\
    3 \& 5 \\
    }} &= \set[\Bigg]{
         \tableau{
         1 \& 3 \\
         2 \& 4 \\
         },\;
         \tableau{
         1 \& 2 \& 3 \\
         4 \\
         },\;
         \tableau{
         1 \& 2 \\
         3 \& 4 \\
         },\;
         \tableau{
         1 \& 2 \& 4 \\
         3 \\
         }
    }, \displaybreak[0]\\
    \setminors[\Bigg]{1}{\tableau{
    1 \& 3 \& 4 \\
    2 \& 5 \\
    }} &= \set[\Bigg]{
         \tableau{
         1 \& 2 \& 3 \\
         4 \\
         },\;
         \tableau{
         1 \& 3 \\
         2 \& 4 \\
         },\;
         \tableau{
         1 \& 3 \& 4 \\
         2 \\
         }
    }, \displaybreak[0]\\
    \setminors[\Bigg]{1}{\tableau{
    1 \& 2 \& 5 \\
    3 \& 4 \\
    }} &= \set[\Bigg]{
         \tableau{
         1 \& 3 \& 4 \\
         2 \\
         },\;
         \tableau{
         1 \& 2 \& 4 \\
         3 \\
         },\;
         \tableau{
         1 \& 2 \\
         3 \& 4 \\
         }
    }, \displaybreak[0]\\
    \setminors[\Bigg]{1}{\tableau{
    1 \& 3 \& 5 \\
    2 \& 4 \\
    }} &= \set[\Bigg]{
         \tableau{
         1 \& 2 \& 4 \\
         3 \\
         },\;
         \tableau{
         1 \& 3 \& 4 \\
         2 \\
         },\;
         \tableau{
         1 \& 3 \\
         2 \& 4 \\
         }
    }.
  \end{align*}
  Thus a tableau of shape $\parens{3,2}$ can be reconstructed from its set of $1$-minors. The case where the shape is
  $\parens{2,2,1}$ is symmetrical.
\end{proof}

\begin{theorem}
  \label{thm:set-reconst}
  Any tableau with at least $5$ entries can be reconstructed from its set of $1$-minors.
\end{theorem}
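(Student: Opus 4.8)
The plan is to argue by induction on $n$, driven by the ``delete the largest entry'' reduction sketched at the start of the section. The engine is the following observation about any $T \in \setyoung{n}$: from $\setminors{1}{T}$ we recover the shape of $T$ (by \fullref{Lemma}{lem:1minor-shape-reconst}), the cell occupied by the largest entry $n$ (by \fullref{Lemma}{lem:1minor-n-pos-reconst}), and the set $\setminors{1}{T - n}$ (by \fullref{Lemma}{lem:1minors-deleting}, deleting $n-1$ from each minor). Since $n$ is the largest entry it occupies an outer corner, so its deletion causes no sliding and no renumbering; thus $T - n$ is simply $T$ with that corner cell erased. Hence, once $T - n$ is reconstructed, reinserting $n$ into its now-known corner recovers $T$. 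This reduces reconstruction of $T \in \setyoung{n}$ to reconstruction of $T - n \in \setyoung{n-1}$.

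For the induction step I would take $n \geq 6$ and assume every tableau in $\setyoung{n-1}$ is reconstructible from its set of $1$-minors. The reduction determines $\setminors{1}{T - n}$ and the location of $n$; since $n - 1 \geq 5$, the induction hypothesis reconstructs $T - n$, and reinserting $n$ yields $T$. No case analysis on the shape is needed at this stage, as the reduction applies uniformly.

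The base case $n = 5$ is where the real work sits, and I expect it to be the main obstacle. Running through the seven partitions of $5$, six are immediate: $(5)$ and $(1,1,1,1,1)$ by \fullref{Lemma}{lem:1minor-row-or-column}; $(4,1)$ and $(2,1,1,1)$ by \fullref{Lemma}{lem:1minor-row-or-column-and-cell}; and $(3,2)$ and $(2,2,1)$ by \fullref{Lemma}{lem:1minor-32-or-221}. The remaining shape $(3,1,1)$ is not covered by any lemma, and since here $n - 1 = 4$ the generic induction step does not apply either. I would close it with the same reduction: the entry $5$ sits in one of the two outer corners, so $T - 5$ has shape $(2,1,1)$ or $(3,1)$, and each of these $4$-entry shapes is reconstructible by \fullref{Lemma}{lem:1minor-row-or-column-and-cell}. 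The delicate point is that this lemma holds for $4$-entry tableaux even though the full theorem fails at $n = 4$, so no circularity arises; thus $T - 5$, and therefore $T$, is determined, completing the base case with no calculation beyond that already carried out in \fullref{Lemma}{lem:1minor-32-or-221}.
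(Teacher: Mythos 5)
Your proposal is correct and takes essentially the same route as the paper: the same three lemmas dispose of the base case $n = 5$, with the shape $\parens{3,1,1}$ handled exactly as in the paper by one application of the shape/location-of-$n$/minors-of-$T-n$ reduction \textparens{\fullref{Lemmata}{lem:1minor-shape-reconst}, \ref{lem:1minor-n-pos-reconst}, \ref{lem:1minors-deleting}} down to the $4$-entry shapes $\parens{3,1}$ and $\parens{2,1,1}$ covered by \fullref{Lemma}{lem:1minor-row-or-column-and-cell}, and the same reduction drives the induction for $n \geq 6$. Your explicit observations that deleting $n$ causes no sliding or renumbering \textparens{so that $T$ is recovered by reinserting $n$ in its known corner} and that no circularity arises from invoking a $4$-entry lemma while the theorem fails at $n = 4$ are points the paper leaves implicit, but the argument is the same.
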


\begin{proof}
  Let $T \in \setyoung{n}$. First, the shape of the tableau $T$ can be reconstructed from its set of $1$-minors by
  \fullref{Lemma}{lem:1minor-shape-reconst}.

  Consider the following eight shapes: $\parens{5}$, $\parens{4,1}$, $\parens{3,2}$, $\parens{2,2,1}$,
  $\parens{2,1,1,1}$, $\parens{1,1,1,1,1}$ \textparens{all shapes of tableaux with $5$ entries except for
    $\parens{3,1,1}$}, and $\parens{3,1}$, $\parens{2,1,1}$ \textparens{which are shapes of tableaux with $4$ entries}.
  In each of these cases, the tableau can reconstructed from its set of $1$-minors by
  \fullref{Lemmata}{lem:1minor-row-or-column}, \ref{lem:1minor-row-or-column-and-cell}, or \ref{lem:1minor-32-or-221}.

  Now suppose that $n > 5$ or that $T$ has shape $\parens{3,1,1}$. The shape of the tableau $T$ can be reconstructed
  from its set of $1$-minors by \fullref{Lemma}{lem:1minor-shape-reconst}. The location of the entry $n$ in $T$ is
  determined by the set of $1$-minors by \fullref{Lemma}{lem:1minor-n-pos-reconst}. By
  \fullref{Lemma}{lem:1minors-deleting}, the set of $1$-minors of $T$ determines the set of $1$-minors of $T -
  n$. Therefore if $T - n$ is determined by its set of $1$-minors, then so is $T$, since the location of the entry $n$
  is determined.

  If $T$ has shape $\parens{3,1,1}$ then $T - n$ has shape $\parens{2,1,1}$ or $\parens{3,1}$ and so is determined by
  its set of $1$-minors by \fullref{Lemma}{lem:1minor-row-or-column-and-cell}. Combining this with the other shapes
  considered, one sees that any tableau with $5$ entries is determined by its set of $1$-minors.

  If $n > 5$, then by induction starting from $n = 5$, the tableau $T - n$ is indeed determined, and hence $T$ is also
  determined since the location of $n$ is determined.
\end{proof}

\section{%
  \texorpdfstring
  {Reconstruction from the multiset of $1$-minors}
  {Reconstruction from the multiset of 1-minors}%
}
\label{sec:reconst-multiset}

The following corollary is immediate from \fullref{Theorem}{thm:set-reconst}:

\begin{corollary}
  Any tableau with at least $5$ entries is reconstructible from its multiset of $1$-minors.
\end{corollary}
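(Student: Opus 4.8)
The plan is to deduce this directly from \fullref{Theorem}{thm:set-reconst}, exploiting the fact that the multiset of $1$-minors carries at least as much information as the set. First I would observe that for any $T \in \setyoung{n}$, the set $\setminors{1}{T}$ is nothing but the underlying set of the multiset $\msetminors{1}{T}$: it is obtained simply by discarding multiplicities. Consequently, if two tableaux $T, T' \in \setyoung{n}$ satisfy $\msetminors{1}{T} = \msetminors{1}{T'}$ as multisets, then passing to underlying sets yields $\setminors{1}{T} = \setminors{1}{T'}$.

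With this observation in hand, the reconstruction from the multiset factors through the reconstruction from the set. For $n \geq 5$, I would invoke \fullref{Theorem}{thm:set-reconst}, which guarantees that any tableau in $\setyoung{n}$ is uniquely determined by its set of $1$-minors; applying this to $\setminors{1}{T} = \setminors{1}{T'}$ forces $T = T'$. There is no genuine obstacle here, since all the substantive work is already contained in the theorem: the only point to establish is that refining the set of $1$-minors to a multiset cannot destroy reconstructibility, and this is immediate. I would therefore keep the argument to a single short paragraph.
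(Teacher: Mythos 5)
Your proposal is correct and matches the paper exactly: the paper states this corollary as immediate from \fullref{Theorem}{thm:set-reconst}, precisely because the multiset of $1$-minors determines the set by discarding multiplicities. Your one-paragraph argument simply makes that implicit step explicit.
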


Among tableaux with $4$ entries, those of shapes $\parens{4}$, $\parens{3,1}$, $\parens{2,1,1}$ and $\parens{1,1,1,1}$
are reconstructible from their sets of $1$-minors by \fullref{Lemmata}{lem:1minor-row-or-column} and
\ref{lem:1minor-row-or-column-and-cell} and so also reconstructible from their multisets of $1$-minors.  The two
tableaux of shape $\parens{2,2}$ are not reconstructible from their multisets of $1$-minors:
\begin{align*}
\msetminors[\Bigg]{1}{ \tableau{ 1 \& 2 \\ 3 \& 4 \\ } }
&= \mset[\Bigg]{ \tableau{ 1 \& 3 \\ 2 \\ }, \tableau{ 1 \& 3 \\ 2 \\ }, \tableau{ 1 \& 2 \\ 3 \\ }, \tableau{ 1 \& 2 \\ 3 \\ } }, \\
\msetminors[\Bigg]{1}{ \tableau{ 1 \& 3 \\ 2 \& 4 \\ } }
&= \mset[\Bigg]{ \tableau{ 1 \& 2 \\ 3 \\ }, \tableau{ 1 \& 3 \\ 2 \\ }, \tableau{ 1 \& 2 \\ 3 \\ }, \tableau{ 1 \& 3 \\ 2 \\ } }.
\end{align*}

Among tableaux with $3$ entries, those of shapes $\parens{3}$ and $\parens{1,1,1}$ are reconstructible from their sets
of $1$-minors by \fullref{Lemma}{lem:1minor-row-or-column} and so also reconstructible from their multisets of
$1$-minors. The two tableaux of shape $\parens{2,1}$ are reconstructible from their multisets of $1$-minors but not
reconstructible from their sets of $1$-minors, since
\begin{align*}
\msetminors[\Bigg]{1}{ \tableau{ 1 \& 2 \\ 3 \\ } }
&= \mset[\Bigg]{ \tableau{ 1 \\ 2 \\ }, \tableau{ 1 \\ 2 \\ }, \tableau{ 1 \& 2 \\ } }, \\
\msetminors[\Bigg]{1}{ \tableau{ 1 \& 3 \\ 2 \\ } }
&= \mset[\Bigg]{ \tableau{ 1 \& 2 \\ }, \tableau{ 1 \\ 2 \\ }, \tableau{ 1 \& 2 \\ } },
\end{align*}
and so
\[
\setminors[\Bigg]{1}{ \tableau{ 1 \& 2 \\ 3 \\ } } =
\setminors[\Bigg]{1}{ \tableau{ 1 \& 3 \\ 2 \\ } } = \set[\Bigg]{ \tableau{ 1 \& 2 \\ }, \tableau{ 1 \\ 2 \\ } }.
\]

Not even the shape of a tableau with $2$ entries is determined by the set or multiset of its $1$-minors.

There is only one tableau with $1$ entry, so its reconstruction is trivial.

\section{Open problems}

The arguments above seem very specialized to the case of reconstruction from sets or multisets of $1$-minors.
The most immediate open problem is the following:

\begin{problem}
  For $k \geq 2$, for which $n$ is every tableau in $\setyoung{n}$ reconstructible from its set or multiset of
  $k$-minors?
\end{problem}

Reconstructibility of a tableau $T$ means that the multiset of its $k$-minors contains a sufficient amount of information for uniquely determining $T$.
Some of the information might be redundant; $T$ might be determined by just a few of its $k$-minors.
This raises the question what the minimum number of cards that guarantee reconstructibility is.

\begin{problem}
  Let $H_k(n)$ be the smallest number $m$ so that every tableau in $\setyoung{n}$ is uniquely determined by any
  submultiset of $\msetminors{k}{T}$ of cardinality at least $m$, provided that $\setyoung{n}$ is reconstructible.
  What are the numbers $H_k(n)$?
\end{problem}

A first step towards answering this question is the following result:

\begin{proposition}
  $H_1(n) \geq \floor{n/2} + 2$.
\end{proposition}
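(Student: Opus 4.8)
The plan is to prove the inequality by exhibiting, for each relevant $n$ (namely $n \geq 5$, so that $\setyoung{n}$ is reconstructible), two \emph{distinct} tableaux in $\setyoung{n}$ whose multisets of $1$-minors share a common submultiset of cardinality $\floor{n/2}+1$. Such a shared submultiset is a submultiset of $\msetminors{1}{\cdot}$ for two different tableaux, so it cannot determine the tableau; hence no $m \leq \floor{n/2}+1$ can have the property defining $H_1(n)$, forcing $H_1(n) \geq \floor{n/2}+2$.

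For the family I would use the hook tableaux of shape $(n-1,1)$. For $2 \leq j \leq n$, let $S_j$ be the tableau of this shape whose single second-row cell contains $j$ (so its first row lists $1,\ldots,n$ with $j$ omitted); these are pairwise distinct. The first key step is to compute $\msetminors{1}{S_j}$ by running jeu de taquin for each deletion. I expect the following outcome. Deleting a value larger than $j$ merely collapses the first row leftwards and leaves $j$ untouched by the renumbering, giving the shape-$(n-2,1)$ tableau with second-row entry $j$. Deleting a value smaller than $j$ (including the cascade triggered by deleting $1$, which first promotes $2$ and then shifts the row) lowers the second-row entry to $j-1$ after renumbering, giving the shape-$(n-2,1)$ tableau with second-row entry $j-1$. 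Deleting $j$ itself empties the second row and yields the single-row tableau. Writing $R$ for the single-row $(n-1)$-entry tableau and $S'_i$ for the shape-$(n-2,1)$ tableau with second-row entry $i$, this gives (for $3 \leq j \leq n-1$) the multiset consisting of $R$ once, $S'_{j-1}$ with multiplicity $j-1$, and $S'_j$ with multiplicity $n-j$, totalling $n$; the boundary case $j=2$, where $S'_1$ degenerates to $R$, is consistent with the same formula.

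The second step is to compare $S_j$ with $S_{j+1}$. Their minor-multisets share only the types $R$ and $S'_j$: the overlap in $R$ contributes $1$, while $S'_j$ occurs with multiplicity $n-j$ in $\msetminors{1}{S_j}$ and $j$ in $\msetminors{1}{S_{j+1}}$, so the multiset intersection has size $1 + \min(n-j,\,j)$. Choosing $j = \floor{n/2}$ gives $\min(n-j,\,j) = \floor{n/2}$, so the common submultiset has size exactly $\floor{n/2}+1$; one checks that $2 \leq j$ and $j+1 \leq n$, so that $S_j$ and $S_{j+1}$ both exist and are distinct for all $n \geq 5$. This produces the required submultiset shared by two distinct tableaux, completing the argument.

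The main obstacle is the careful jeu de taquin bookkeeping in the first step, in particular the deletion of $1$, whose slide first promotes $2$ (respectively $j$ when $j=2$) before cascading along the first row, together with tracking the renumbering so that small-value and large-value deletions land on the two \emph{different} hooks $S'_{j-1}$ and $S'_j$. Once that multiset is pinned down, the comparison of $S_j$ with $S_{j+1}$ and the optimization over $j$ are routine. One could equally use the transpose family of shape $(2,1,\ldots,1)$; by symmetry it gives the same bound.
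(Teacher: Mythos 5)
Your proposal is correct and takes essentially the same approach as the paper: for even $n$ your pair $S_{\floor{n/2}}$, $S_{\floor{n/2}+1}$ is exactly the paper's $T_2$, $T_1$, sharing $\floor{n/2}$ copies of a common hook minor plus one copy of the single-row minor. The only difference is cosmetic\dash you treat odd $n$ with the same shape-$\parens{n-1,1}$ family, where your count $1+\min\parens{j,n-j}$ with $j=\floor{n/2}$ still yields $\floor{n/2}+1$ common $1$-minors, whereas the paper switches to a three-row variant with $2k+1$ in a third row; both verifications go through.
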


\begin{proof}
  It is sufficient to exhibit, for each $n$, two different tableaux of size $n$ with $\floor{n/2} + 1$ common
  $1$-minors. There are two cases, depending on the parity of $n$.

  For $n = 2k$, let
  \begin{align*}
    T_1 &= \tableau{
      |[doublewidth]| 1 \& 2 \& |[dashedentry]| \& k \& |[doublewidth]| k + 2 \& |[doublewidth]| k + 3 \& |[dashedentry]| \& 2k \\
      |[doublewidth]| k + 1 \\
    }\,, \\
    T_2 &= \tableau{
      1 \& 2 \& |[dashedentry]| \& |[doublewidth]| k-1 \& |[doublewidth]| k+1 \& |[doublewidth]| k+2 \& |[dashedentry]| \& 2k \\
      k \\
    }\,.
  \end{align*}
  The multisets of $1$-minors of $T_1$ and $T_2$ each contain $k = \floor{n/2}$ copies of
  \[
    \tableau{
      1 \& 2 \& |[dashedentry]| \& |[doublewidth]| k-1 \& |[doublewidth]| k+1 \& |[doublewidth]| k+2 \& |[dashedentry]| \& |[doublewidth]| 2k-1 \\
      k \\
    }\,,
  \]
  resulting from the deletion of $1,2,\ldots,k$ from $T_1$ and of $k+1,\ldots,2k$ from $T_2$, and $1$ copy of
  \[
    \tableau{ 1 \& 2 \& |[dashedentry]| \& |[doublewidth]| 2k-1 \\ }\,,
  \]
  resulting from the deletion of $k+1$ from $T_1$ and $k$ from $T_2$. Hence $T_1$ and $T_2$ are different tableaux of
  size $n$ with at least $\floor{n/2} + 1$ common $1$-minors.

  For $n = 2k+1$, let
  \begin{align*}
    T_3 &= \tableau{
      |[doublewidth]| 1 \& 2 \& |[dashedentry]| \& k \& |[doublewidth]| k + 2 \& |[doublewidth]| k + 3 \& |[dashedentry]| \& 2k \\
      |[doublewidth]| k + 1 \\
      |[doublewidth]| 2k + 1 \\
    }\,, \\
    T_4 &= \tableau{
      |[doublewidth]| 1 \& 2 \& |[dashedentry]| \& |[doublewidth]| k-1 \& |[doublewidth]| k+1 \& |[doublewidth]| k+2 \& |[dashedentry]| \& 2k \\
      |[doublewidth]| k \\
      |[doublewidth]| 2k + 1 \\
    }\,.
  \end{align*}
  The multisets of $1$-minors of $T_3$ and $T_4$ each contain $k = \floor{n/2}$ copies
  \[
    \tableau{
      1 \& 2 \& |[dashedentry]| \& |[doublewidth]| k-1 \& |[doublewidth]| k+1 \& |[doublewidth]| k+2 \& |[dashedentry]| \& |[doublewidth]| 2k-1 \\
      k \\
      2k\\
    }
  \]
  resulting from the deletion of $1,2,\ldots,k$ from $T_3$ and of $k+1,\ldots,2k$ from $T_4$, and $1$ copy of
  \[
    \tableau{
      1 \& 2 \& |[dashedentry]| \& |[doublewidth]| 2k-1 \\
      2k \\
    }\,.
  \]
  resulting from the deletion of $k+1$ from $T_3$ and of $k$ from $T_4$. Hence $T_3$ and $T_4$ are different tableaux of
  size $n$ with at least $\floor{n/2} + 1$ common $1$-minors.
\end{proof}

\bibliography{\jobname}
\bibliographystyle{alphaabbrv}

\end{document}